\documentclass[12pt, reqno]{amsart}
\usepackage{amssymb, graphicx}
\usepackage{amsmath,amsthm,amssymb,mathrsfs,amsfonts}
\usepackage{hyperref}
\usepackage{a4wide}
\usepackage{xcolor}
\usepackage{float}
\usepackage{pgf,tikz}
\usetikzlibrary{arrows}

\def \D {\mathbb{D}}

\def \T {\mathbb{T}}

\newcommand{\cD}{{\mathcal{D}}}
\newcommand{\cZ}{{\mathcal{Z}}}

\newtheorem{theorem}{Theorem}[section]
\newtheorem{lemma}[theorem]{Lemma}

\newtheorem{remark}[theorem]{Remark}

\newtheorem{corollary}[theorem]{Corollary}

\begin{document}

\title[Extremal functions and zero sets for the Dirichlet space]{Extremal functions and zero sets for the Dirichlet space}
\author[A. Borichev]{{Alexander}{ Borichev}}
\author[O. El-Fallah]{{Omar}{ El-Fallah}}
\author[K. Kellay]{{Karim}{ Kellay}}
\subjclass[2000]{Primary 46E22; Secondary 31A05, 31A15, 31A20, 47B32}
\keywords{Dirichlet  spaces, zero sets, extremal function, reproducing kernel}
\address{A.~Borichev, Aix-Marseille University, CNRS, I2M, Marseille, France}
\email{alexander.borichev@math.cnrs.fr}

\address{O.~El-Fallah, Laboratory of Mathematical Analysis and Applications \\  Mohamed V University in Rabat \\  B.P. 1014 Rabat\\Morocco}
\email{o.elfallah@um5r.ac.ma}

\address{K.~Kellay, University of Bordeaux, CNRS, Bordeaux INP, IMB, UMR 5251, F-33400, Talence, France}  
\email{kkellay@math.u-bordeaux.fr}

\begin{abstract}
We study the zeros of functions in the Dirichlet space. Using extremal functions, we produce a necessary and sufficient condition for a sequence of points in the unit disk to be a zero set of the classical Dirichlet space. This Shapiro--Shields type condition involves kernels of the harmonic Dirichlet space associated with measures depending on the zero sequence.
\end{abstract}

\maketitle

\section{Introduction}

Let $H^2$ denote the classical {\it Hardy space} of analytic functions on the unit disk $\D$ having square summable Taylor coefficients at the origin. Every function $f \in H^2$ has non-tangential limits almost everywhere on the unit circle $\T = \partial \D$. We denote by $f(\zeta)$ the non-tangential limit of $f$ at $\zeta \in \T$, if it exists.

Let $\mu$ be a positive finite measure on $\T$. The so called {\it harmonic Dirichlet space} $\cD(\mu)$ is the set of analytic functions $f \in H^2$ such that
$$
\cD_{\mu}(f) := \frac{1}{2\pi} \int_{\T} \int_{\T} \frac{|f(\zeta) - f(\xi)|^2}{|\zeta - \xi|^2} \,|d\zeta| \,d\mu(\xi) < \infty.
$$
By Douglas' formula (see, for instance, \cite[Proposition 2.2]{RS91}), 
$$
\cD_{\mu}(f) = \int_{\D} |f'(z)|^2 P_\mu(z) \, dA(z),
$$
where $dA(z)$ is the normalized Lebesgue measure on $\D$ and $P_\mu$ is the Poisson integral of $\mu$ given by
$$
P_\mu(z) = \int_\T \frac{1 - |z|^2}{|1 - \bar{\zeta} z|^2} \,d\mu(\zeta), \qquad z \in \D.
$$
The space $\mathcal{D}(\mu)$ is endowed with the norm
$$
\|f\|_{\mathcal{D}(\mu)}^{2} := \|f\|^{2}_{H^2} + \cD_{\mu}(f).
$$
When $\mu=m$, the normalized arc measure on $\T$, $\cD(\mu)$ reduces to the classical Dirichlet space $\cD$, consisting of all
holomorphic functions $f$ on the unit disk $\D$ whose derivative $f'$ is square-integrable. S.~Richter introduced in  \cite{Ri1,R} harmonic Dirichlet spaces, motivated by his study of cyclic analytic two-isometries and shift invariant subspaces on the Dirichlet space. S.~Richter and C.~Sundberg extended in \cite{RS} this study to deal with 

the lattice of closed shift invariant subspaces of $\mathcal{D}(\mu)$. However, these results do not readily yield a description of the cyclic vectors for the shift on $\mathcal{D}(\mu)$ nor the characterization of the zero set for Dirichlet space functions.

Denote by $k^\mu_z$ the reproducing kernel of $\cD(\mu)$ at $z \in \mathbb{D}$, 
$$
f(z) = \langle f, k^\mu_z \rangle_{\mathcal{D}(\mu)}, \qquad f \in \cD(\mu).
$$
When $\mu = 0$, $\cD(\mu)$ is the Hardy space $H^2$, and, hence, the reproducing kernel $k^0_z$ is the Cauchy kernel.  If $\mu=m$, the normalized Lebesgue measure on the unit circle,  then $\mathcal{D}(\mu) = \mathcal{D}$, and the reproducing kernel of the Dirichlet space is given by
\begin{equation}
\label{kerneldirichet}
k^m_w(z) = \frac{1}{\overline{w}z} \log \left( \frac{1}{1 - \overline{w}z} \right), \qquad z, w \in \D.
\end{equation}

Let us mention that H.~Shapiro--A.~Shields \cite{SS} 
improved an earlier result of L.~Carleson~\cite{C} by showing that the condition 
\[
\sum_{z\in \mathcal{Z}} 
\frac{1}{\log (1/(1-|z|))} < \infty
\label{1.1}
\]
is sufficient for $\cZ$ to be a zero set for $\mathcal D$. 

S.~Shimorin proved in \cite[Theorem 1.1]{Shi2} that all harmonic Dirichlet spaces $\cD(\mu)$ have complete Nevanlinna--Pick reproducing kernels. As an important consequence (see 
\cite[Chapter 2, Theorem 1]{Seip}), every 
sequence $\cZ  
\subset \D$ satisfying the Shapiro--Shields condition
\begin{equation}\label{SScondition}
\sum_{z \in \cZ} {1}/{k^\mu_z(z)} < \infty,
\end{equation}
is a zero set of $\cD(\mu)$. 

An asymptotic estimate of the reproducing kernel $k^\mu$ of $\cD(\mu)$ on the diagonal was given in \cite{EEK} (see Remark~\ref{r32} below). 

It is known that condition \eqref{SScondition} is not necessary in the general case and, in particular, for the classical Dirichlet space, see \cite{KM, NRS, RRS}. 

In the case of the Hardy space, \eqref{SScondition} becomes the Blaschke condition
$$
\sum_{z \in \cZ} (1 - |z|) < \infty,
$$
which is a necessary and sufficient condition for the sequence $\cZ$ to be a zero set of $H^2$. 
On the other hand, if $B$ is a Blaschke product associated with the set $\cZ$, then $\cZ$ is not a zero set for the model space $K_B=H^2\ominus BH^2$ although
$\sum_{z\in \cZ}1/k_z(z)=\sum_{z\in \cZ}(1-|z|)<\infty$, where $k$ is the reproducing kernel in $K_B$.
Model spaces fail to have the complete Nevanlinna--Pick property, except when they are one-dimensional \cite{CC}.

In this paper, we first establish a necessary and sufficient condition for a sequence of points in the unit disk to be a zero set of the classical Dirichlet space. This condition is of Shapiro-Shields type and involves so called extremal functions in the Dirichlet spaces associated with measures depending on the sequence, see Section~\ref{sect2} below. As a corollary, we then present some sufficient conditions for the zero sets, under additional separation conditions. 

For an alternative approach to the Dirichlet spaces zero sets, also using the extremal functions, see the recent paper \cite{HR}. For the Dirichlet space and its associated properties, we refer the reader to \cite{ARSW,EKMR}.

\section{Background on extremal functions}\label{sect2}

 A function $\phi \in \cD(\mu)$ is said to be extremal if 
\[
 \| \phi \|_{\cD(\mu)} = 1 \quad \text{ and } \quad \langle \phi, z^n\phi\rangle_{\cD(\mu)} = 0,  \quad n \geq 1.
\] 
Any extremal function is a multiplier, see, for instance, \cite[Theorem 3.1]{RS92}. A.~Aleman \cite[Lemma 4.8]{A}, and S.~Shimorin \cite[Theorem 1]{Shi1}, proved that extremal functions $\phi \in \cD(\mu)$ satisfy 
$$
|\phi(z)| \le 1, \qquad z \in \mathbb{D}.
$$

Given any non-zero closed subspace $\mathcal{M}$ of $\cD(\mu)$ invariant under the shift operator $M_z:f\mapsto zf$, we have $\dim(\mathcal{M}\ominus \mathcal{M})=1$, see, for instance, \cite[Theorem 3.2]{RS92}. 
If $\phi_\mathcal{M}\in\mathcal{M}\ominus \mathcal{M}$ and $\|\phi_\mathcal{M}\|_\mu=1$, then $\phi_\mathcal{M}$ is an extremal function for $\cD(\mu)$. 
In this situation, we say that $\phi_\mathcal{M}$ is an extremal function associated with $\mathcal{M}$.

From now on, we deal with absolutely continuous measures $\mu$. Given $f\in \cD(\mu)$, we define $d\mu_f(\zeta)= |f(\zeta)|^2 d\mu(\zeta)$, with $f(\zeta)$ being the radial limits. Then $\mu_f$ is also absolutely continuous. If $f$ is an extremal function for $\cD(\mu)$, then the radial limits of $f$ exist on the whole unit circle, see  \cite[Theorem 5.1]{RS}.

S.~Richter \cite[Theorem 7.1]{R91} proved that for every $M_z$-invariant subspace $\mathcal{M}$ of $\cD(\mu)$ we have  
$$
\mathcal{M} = \phi_\mathcal{M} \cD(\mu_{\phi_\mathcal{M}})
$$
and 
$$
\|f\phi_\mathcal{M}\|_{\cD(\mu)} = \|f\|_{\cD(\mu_{\phi_\mathcal{M}})}, \qquad f \in \cD(\mu_{\phi_\mathcal{M}}),
$$
where $\phi_\mathcal{M}$ is an extremal function associated with $\mathcal{M}$.

Furthermore, it is easily seen that if $\phi\in\cD(\mu)\setminus\{0\}$ and 
$$
\|f\phi\|_{\cD(\mu)} = \|f\|_{\cD(\mu_\phi)},\qquad f \in \cD(\mu_\phi),
$$ 
then $\phi$ is an extremal function for $\cD(\mu)$. 

This implies that the extremality property is related to the multiplicative structure, and we obtain the following lemma.

\begin{lemma}\label{produit}
   If $\phi$ is an extremal function for $\cD(\mu)$ and $\psi$ is an extremal function for $\cD(\mu_\phi)$, then $\phi\psi$ is an extremal function for $\cD(\mu)$.
\end{lemma}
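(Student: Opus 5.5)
The plan is to use the characterization recalled just before the statement: a nonzero $\Phi\in\cD(\mu)$ for which $\|f\Phi\|_{\cD(\mu)}=\|f\|_{\cD(\mu_\Phi)}$ holds for all $f\in\cD(\mu_\Phi)$ is extremal for $\cD(\mu)$. We apply this with $\Phi=\phi\psi$, using Richter's isometry twice, once for $\phi$ and once for $\psi$.

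\textbf{Step 1 (the isometry for $\phi$).} Let $\mathcal{M}$ be the closed $M_z$-invariant subspace of $\cD(\mu)$ generated by $\phi$. Then $\phi\perp z^n\phi$ for $n\ge1$, and $\|\phi\|=1$, so $\phi\in\mathcal{M}\ominus z\mathcal{M}$. Hence $\phi$ is the extremal function associated with $\mathcal{M}$. Richter's theorem \cite[Theorem 7.1]{R91} then gives
\[
\|f\phi\|_{\cD(\mu)}=\|f\|_{\cD(\mu_\phi)},\qquad f\in\cD(\mu_\phi).
\]

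\textbf{Step 2 (the isometry for $\psi$).} In the same way, applied in $\cD(\mu_\phi)$, we get
\[
\|g\psi\|_{\cD(\mu_\phi)}=\|g\|_{\cD((\mu_\phi)_\psi)},\qquad g\in\cD((\mu_\phi)_\psi).
\]
Since $\mu$ is absolutely continuous, $(\mu_\phi)_\psi=|\psi|^2|\phi|^2\,d\mu=\mu_{\phi\psi}$, where the radial limits exist a.e.

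\textbf{Step 3 (combining).} Take $f\in\cD(\mu_{\phi\psi})$. By Step 2, $f\psi\in\cD(\mu_\phi)$ and $\|f\psi\|_{\cD(\mu_\phi)}=\|f\|_{\cD(\mu_{\phi\psi})}$. Applying Step 1 to $f\psi$ gives
\[
\|f\phi\psi\|_{\cD(\mu)}=\|f\psi\|_{\cD(\mu_\phi)}=\|f\|_{\cD(\mu_{\phi\psi})}.
\]
Taking $f=1$ (which lies in every $\cD(\nu)$) shows $\phi\psi\in\cD(\mu)$ with norm $\|1\|_{\cD(\mu_{\phi\psi})}=\|1\|_{H^2}=1$, so $\phi\psi\neq0$. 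The characterization above now gives that $\phi\psi$ is extremal.

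\textbf{The ``easily seen'' characterization.} If we want a self-contained argument, we check the orthogonality directly. The isometry $f\mapsto f\Phi$ is real-linear, so by polarization it preserves inner products. Therefore
\[
\langle \Phi,z^n\Phi\rangle_{\cD(\mu)}=\langle 1,z^n\rangle_{\cD(\mu_\Phi)} .
\]
The right-hand side vanishes for $n\ge1$: the $H^2$ part is zero, and the Dirichlet part involves $(1-1)\overline{(\zeta^n-\xi^n)}=0$.

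\textbf{Main obstacle.} The delicate point is justifying that Richter's theorem applies to $\psi$ in $\cD(\mu_\phi)$. This requires $\mu_\phi$ to be a finite, absolutely continuous measure, which holds because $|\phi|\le1$. It also requires identifying $(\mu_\phi)_\psi$ with $\mu_{\phi\psi}$, which uses the a.e. existence of radial limits of $\phi$ and $\psi$, and that radial limits multiply.
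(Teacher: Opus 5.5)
Your proposal is correct and is the same argument as the paper's. You chain the two Richter isometries to get $\|f\phi\psi\|_{\cD(\mu)}=\|f\psi\|_{\cD(\mu_\phi)}=\|f\|_{\cD(\mu_{\phi\psi})}$ and then invoke the converse characterization of extremal functions, while also filling in details the paper leaves implicit: that $\phi$ is the extremal function of the invariant subspace it generates, that $(\mu_\phi)_\psi=\mu_{\phi\psi}$, and the polarization check of the converse (where the property you actually need is that $f\mapsto f\Phi$ is complex-linear, not merely real-linear).
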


\begin{proof}
Let \( f \in \cD(\mu_{\phi\psi}) \). Since 
$$ \|f\phi\psi\|_{\cD(\mu)} = \|f\psi\|_{\cD(\mu_\phi)} = \|f\|_{\cD(\mu_{\phi\psi})}, $$
we obtain the desired result.
\end{proof}

Let $n \geq 1$ and let
\[
\mathcal{Z}_n = ( z_1, z_2, \ldots, z_n )\subset \mathbb{D}.
\]
Denote by $\mathcal{M}(\mathcal{Z}_n,\mathcal{D}(\mu))$ the invariant subspace of
$\mathcal{D}(\mu)$ consisting of all functions vanishing on $\mathcal{Z}_n$, that is,
\[
\mathcal{M}(\mathcal{Z}_n,\mathcal{D}(\mu))
= \{ f \in \mathcal{D}(\mu) : f|_{\mathcal{Z}_n} = 0 \}.
\]
We have
\[
\mathcal{M}(\mathcal{Z}_n,\mathcal{D})
= \phi_n\, \mathcal{D}(\mu_n),
\]
where $\phi_n$ is an extremal function associated with
$\mathcal{M}(\mathcal{Z}_n,\mathcal{D})$ and
\[
d\mu_n = |\phi_n|^2 \, dm .
\]

Using Lemma~\ref{produit}, the function $\phi_n$ can be expressed as the product of
extremal functions, each vanishing at a single point of $\mathcal{Z}_n$.
Indeed,
\[
\mathcal{M}(\{z_1\},\mathcal{D})
= \phi_1\, \mathcal{D}(\mu_1)
= \phi_{z_1,m}\, \mathcal{D}(\mu_1),
\]
where $\phi_1 = \phi_{z_1,m}$ is the extremal function associated with
$\mathcal{M}(\{z_1\},\mathcal{D})$ and
\[
d\mu_1 = |\phi_1(\zeta)|^2 \, dm .
\]
Next,
\[
\mathcal{M}(\{z_1,z_2\},\mathcal{D})
= \phi_2\, \mathcal{D}(\mu_2)
= \phi_{z_2,\mu_1}\, \phi_1\, \mathcal{D}(\mu_2),
\]
where $\phi_2$ is an extremal function associated with
$\mathcal{M}(\mathcal{Z}_2,\mathcal{D})$,
$d\mu_2 = |\phi_2|^2 \, dm$, and $\phi_{z_2,\mu_1}$ is an extremal function
associated with $\mathcal{M}(\{z_2\},\mathcal{D}(\mu_1))$.
By Lemma~\ref{produit}, the product $\phi_{z_2,\mu_1}\phi_1$ is an extremal
function for $\mathcal{D}(\mu_2)$.

Finally, we obtain the following lemma, which describes a multiplicative property of extremal functions associated with the zero sequence $\mathcal{Z}_n$.

\begin{lemma} \label{fact2}
We have 
$$
\mathcal{M}(\mathcal{Z}_n, \cD(\mu)) = \phi_n \mathcal{D}(\mu_n) = \biggl( \prod_{j=1}^{n} \phi_{z_j, \mu_{j-1}} \biggr) \mathcal{D}(\mu_n),
$$
where \( d\mu_j = |\phi_j(\zeta)|^2 \, dm \), \( \phi_j \) is an extremal function for \( \mathcal{M}(\mathcal{Z}_j, \mathcal{D}) \), and \( \phi_{z_j, \mu_{j-1}} \) is an extremal function for \( \mathcal{M}(\{z_j\}, \mathcal{D}(\mu_{j-1})) \) for \( j = 1, \ldots, n \), with \( \mu_0 = m \).
\end{lemma}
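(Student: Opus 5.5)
We argue by induction on $n$, using Richter's theorem (\cite[Theorem 7.1]{R91}) together with Lemma~\ref{produit}. Here $\mu=m$, so $\mu_0=m$. The first equality $\mathcal{M}(\mathcal{Z}_n,\cD)=\phi_n\cD(\mu_n)$ is Richter's theorem applied to the $M_z$-invariant subspace $\mathcal{M}(\mathcal{Z}_n,\cD)$. The content of the lemma is that $\phi_n$ may be taken to be the product $\Phi_n:=\prod_{j=1}^n\phi_{z_j,\mu_{j-1}}$, so that $\phi_j=\Phi_j$ and $\mu_j=|\Phi_j|^2\,dm$ for all $j$. The case $n=1$ is the definition of $\phi_1=\phi_{z_1,m}$.

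For the inductive step, assume $\mathcal{M}(\mathcal{Z}_{n-1},\cD)=\phi_{n-1}\cD(\mu_{n-1})$, with $\phi_{n-1}=\Phi_{n-1}$ extremal for $\cD$. Let $\psi=\phi_{z_n,\mu_{n-1}}$ be the extremal function of $\mathcal{M}(\{z_n\},\cD(\mu_{n-1}))$. By Richter's theorem in $\cD(\mu_{n-1})$,
\[
\mathcal{M}(\{z_n\},\cD(\mu_{n-1}))=\psi\,\cD\big((\mu_{n-1})_\psi\big),
\qquad (\mu_{n-1})_\psi=|\phi_{n-1}\psi|^2\,dm .
\]
Lemma~\ref{produit} then shows that $\phi_{n-1}\psi$ is extremal for $\cD$, and its proof gives the isometry $\|f\phi_{n-1}\psi\|_{\cD}=\|f\|_{\cD(|\phi_{n-1}\psi|^2dm)}$.

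It remains to identify $\mathcal{M}(\mathcal{Z}_n,\cD)$ with $\phi_{n-1}\psi\,\cD(|\phi_{n-1}\psi|^2dm)$.

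\emph{Inclusion ``$\supseteq$''.} Let $f\in\cD(|\phi_{n-1}\psi|^2dm)$. Then $f\psi$ lies in $\mathcal{M}(\{z_n\},\cD(\mu_{n-1}))$, so $f\psi\in\cD(\mu_{n-1})$ and $f\psi$ vanishes at $z_n$. By the induction hypothesis, $\phi_{n-1}f\psi\in\cD$ and it vanishes on $\mathcal{Z}_{n-1}$. Hence $\phi_{n-1}f\psi$ vanishes on all of $\mathcal{Z}_n$.

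\emph{Inclusion ``$\subseteq$''.} Let $g\in\mathcal{M}(\mathcal{Z}_n,\cD)$. Since $g$ vanishes on $\mathcal{Z}_{n-1}$, we can write $g=\phi_{n-1}h$ with $h\in\cD(\mu_{n-1})$. We need $h(z_n)=0$; then $h=\psi f$ with $f\in\cD(|\phi_{n-1}\psi|^2dm)$, which gives the inclusion.

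\textbf{Main obstacle.} The condition $h(z_n)=0$ is immediate if $\phi_{n-1}(z_n)\neq0$. The difficulty arises if $z_n$ coincides with some earlier $z_j$, i.e.\ repeated points, where $\mathcal{M}(\mathcal{Z}_n,\cD)$ must be read as vanishing with multiplicity. If $\phi_{n-1}$ vanished at $z_n$ beyond the prescribed multiplicity, then $\phi_{n-1}\cD(\mu_{n-1})$ would be strictly smaller than $\mathcal{M}(\mathcal{Z}_{n-1},\cD)$. So we first establish that $\phi_{n-1}$ has no zeros in $\D$ other than $\mathcal{Z}_{n-1}$, counted with multiplicity. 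This follows from the equality $\mathcal{M}(\mathcal{Z}_{n-1},\cD)=\phi_{n-1}\cD(\mu_{n-1})$: the polynomial $\prod_{j<n}(z-z_j)$ lies on the left-hand side, and every element of the right-hand side is divisible by $\phi_{n-1}$. With multiplicities counted in this way, $g=\phi_{n-1}h$ vanishing to the required order at $z_n$ forces $h(z_n)=0$.

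Once both inclusions hold, Richter's description of the invariant subspace shows that $\phi_{n-1}\psi$ spans $\mathcal{M}\ominus z\mathcal{M}$, where $\mathcal{M}=\mathcal{M}(\mathcal{Z}_n,\cD)$. Indeed, $\phi_{n-1}\psi$ is extremal, lies in $\mathcal{M}$, and $\mathcal{M}=\phi_{n-1}\psi\,\cD(\cdot)$ isometrically. Since $\dim(\mathcal{M}\ominus z\mathcal{M})=1$, we may choose $\phi_n=\phi_{n-1}\psi$ up to a unimodular constant, and then $\mu_n=|\phi_n|^2dm$. This completes the induction.
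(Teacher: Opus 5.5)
Your proposal is correct and follows the same route as the paper, which obtains the lemma by iterating Richter's factorization $\mathcal{M}=\phi_{\mathcal{M}}\cD(\mu_{\phi_{\mathcal{M}}})$ together with Lemma~\ref{produit}. You also supply details the paper leaves implicit: the double inclusion identifying $\mathcal{M}(\mathcal{Z}_n,\cD)$ with $\phi_{n-1}\psi\,\cD(|\phi_{n-1}\psi|^2dm)$, including the check that $\phi_{n-1}$ has no zeros beyond $\mathcal{Z}_{n-1}$ (needed when points repeat and multiplicities are counted), and the identification $\phi_n=c\,\phi_{n-1}\psi$, $|c|=1$, via $\dim(\mathcal{M}\ominus z\mathcal{M})=1$.
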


Recall that an extremal function \( \phi_{a,\mu} \) for \( \mathcal{M}(\{a\}, \mathcal{D}(\mu)) \) is given by
\begin{equation}\label{extr1}
\phi_{a,\mu}(z) = \Bigl( 1 - \frac{1}{k^{\mu}_a(a)} \Bigr)^{-1/2} \Bigl( 1 - \frac{k^{\mu}_a(z)}{k^{\mu}_a(a)} \Bigr),
\end{equation}
where \( k^{\mu}_a(z) \) is the reproducing kernel for \( \mathcal{D}(\mu) \), and that $k^{\mu}_0(z) =k^{\mu}_z(0) = 1$, see \cite[Sections 2, 3]{Shi1}.

\section{Necessary and sufficient condition for Dirichlet zero sequences} 

\begin{theorem}\label{ShapiroShieldsdmu} 
Let \( \mathcal{Z} = (z_n)_{n \geq 1} \subset \mathcal{D} \). The sequence \( \mathcal{Z} \) is a zero set for \( \mathcal{D} \) if and only if 
\begin{equation}\label{SScond}
\sum_{n\geq 1} 1/{k^{\mu_{n-1}}_{z_n}(z_n)} < \infty, 
\end{equation}
where $d\mu_n=|\phi_n(\zeta)|^2\,dm$, $\phi_n$ is an extremal function for $\mathcal{M}_n=\mathcal{M}(\mathcal{Z}_n, \mathcal{D})$, $n\ge 1$, and $\mu_0=m$.
\end{theorem}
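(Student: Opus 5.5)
The plan is to track the norms of the extremal functions $\phi_n$ at a fixed point, namely $|\phi_n(0)|$, along the chain of factorizations given by Lemma~\ref{fact2}. By \eqref{extr1} (with $k^\mu_a(0)=1$), the one-point extremal function satisfies
\[
|\phi_{z_n,\mu_{n-1}}(0)|^2=\Bigl(1-\frac{1}{k^{\mu_{n-1}}_{z_n}(z_n)}\Bigr)^{-1}\Bigl(1-\frac{1}{k^{\mu_{n-1}}_{z_n}(z_n)}\Bigr)^{2}=1-\frac{1}{k^{\mu_{n-1}}_{z_n}(z_n)}.
\]
By Lemma~\ref{fact2}, $\phi_n=\prod_{j\le n}\phi_{z_j,\mu_{j-1}}$ up to a unimodular constant. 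Hence
\[
|\phi_n(0)|^2=\prod_{j=1}^n\Bigl(1-\frac{1}{k^{\mu_{j-1}}_{z_j}(z_j)}\Bigr).
\]
Each factor lies in $(0,1]$, because $k^{\mu}_a(a)\ge 1$ for $\mu$ with $\mu(\T)\le1$ (note $\mu_j(\T)\le 1$ since $|\phi_j|\le1$). If the origin belongs to $\cZ$, first replace $\cZ$ by a rotation-free Möbius shift, or treat $0$ separately by dividing out $z$. The infinite product converges to a positive limit if and only if \eqref{SScond} holds. So the theorem reduces to showing that $\cZ$ is a zero set for $\cD$ if and only if $\inf_n|\phi_n(0)|>0$.

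Next I identify $|\phi_n(0)|$ extremally. Since $\phi_n$ spans $\mathcal{M}_n\ominus z\mathcal{M}_n$ and the norm of $\cD$ restricted to $\mathcal M_n$ dominates the $H^2$ norm, $\phi_n$ is (up to normalization) the solution of
\[
\sup\{\mathrm{Re}\, f(0): f\in\mathcal{M}_n,\ \|f\|_{\cD}\le1\}=|\phi_n(0)|.
\]
Indeed, $f(0)=\langle f,P_{\mathcal M_n}k^m_0\rangle$, and $P_{\mathcal M_n}1$ is orthogonal to $z\mathcal M_n$ because $\langle zg,1\rangle=0$ in $\cD$. So $P_{\mathcal M_n}1=c\phi_n$ with $c=\overline{\phi_n(0)}$, and the supremum equals $\|P_{\mathcal M_n}1\|=|\phi_n(0)|$. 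The spaces $\mathcal M_n$ decrease, so $|\phi_n(0)|$ is nonincreasing, consistent with the product formula.

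For sufficiency, assume $\inf|\phi_n(0)|=\delta>0$. The functions $\phi_n$ have norm $1$ in $\cD$, so a subsequence converges weakly, hence locally uniformly, to some $\phi\in\cD$. Then $\phi$ vanishes on every $z_j$, because $\phi_n(z_j)=0$ for $n\ge j$, and $|\phi(0)|\ge\delta$, so $\phi\not\equiv0$. Thus $\cZ$ is a zero set. For necessity, suppose $f\in\cD$, $f\ne0$, vanishes on $\cZ$. Dividing by a power of $z$ if needed, I may assume $f(0)\ne0$; this alters only finitely many terms of \eqref{SScond} at the origin, which I handle as in the remark above. Then $f\in\mathcal M_n$ for all $n$, so $|\phi_n(0)|\ge |f(0)|/\|f\|_{\cD}>0$ uniformly in $n$.

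The main obstacle I expect is bookkeeping rather than analysis. First, I must check carefully that the product in Lemma~\ref{fact2} gives $|\phi_n(0)|=\prod_j|\phi_{z_j,\mu_{j-1}}(0)|$ exactly, with each factor normalized as in \eqref{extr1}; this uses the isometry $\|f\phi\|_{\cD(\mu)}=\|f\|_{\cD(\mu_\phi)}$ and uniqueness of extremal functions up to unimodular constants. Second, I must handle points $z_n=0$ and repeated points (multiplicities), where $k^{\mu}_0(0)=1$ would make a factor vanish. For these I will exclude $0$ from $\cZ$ after dividing by $z^k$, or evaluate at another point $a\notin\cZ$, using the reproducing kernel $k^m_a$ in place of $1$.
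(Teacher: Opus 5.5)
Your argument is correct. It follows the paper's skeleton: the product formula for $|\phi_n(0)|$ from Lemma~\ref{fact2} and \eqref{extr1}, the reduction of \eqref{SScond} to $\inf_n|\phi_n(0)|>0$, and sufficiency via a weakly convergent subsequence of $(\phi_n)$. Your necessity step, however, is genuinely different.

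The paper appeals to \cite[Proposition 5.2]{EEL}, namely the weak convergence $\phi_n\to\phi_{\mathcal M}$ for the decreasing chain $\mathcal M_n\downarrow\mathcal M=\mathcal M(\mathcal Z,\mathcal D)$. It also leaves implicit that $\phi_{\mathcal M}(0)\neq0$. You instead derive $|\phi_n(0)|=\|P_{\mathcal M_n}1\|_{\mathcal D}$ from the orthogonality $P_{\mathcal M_n}1\perp z\mathcal M_n$. This immediately gives $|\phi_n(0)|\ge|f(0)|/\|f\|_{\mathcal D}$ for any $f\in\mathcal M$ with $f(0)\neq0$.

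Your route is self-contained and elementary, and it gives more than you use. Since $P_{\mathcal M_n}\to P_{\mathcal M}$ strongly, $\lim_n|\phi_n(0)|=\|P_{\mathcal M}1\|_{\mathcal D}$. For $0\notin\mathcal Z$ this limit is positive exactly when $\mathcal M\neq\{0\}$, so both directions follow at once without any compactness argument. Moreover, the normalized functions $\phi_n=P_{\mathcal M_n}1/\|P_{\mathcal M_n}1\|$ converge in norm to $\phi_{\mathcal M}$. That is stronger than the weak convergence the paper imports from \cite{EEL}.

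Two small corrections.
\begin{itemize}
\item $k^\mu_a(a)>1$ for $a\neq0$ holds for every $\mu$, because $k^\mu_a-1\perp1$ and $k^\mu_a\neq1$. So the mass bound on $\mu_j$ plays no role. Strict inequality is what you need for the factors to lie in $(0,1)$.
\item Removing the origin from $\mathcal Z$ does not alter only finitely many terms of \eqref{SScond}; it changes every later measure $\mu_n$. For example, $z_1=0$ gives $\phi_1=z/\sqrt2$ and $\mu_1=m/2$.
\end{itemize}
The case $0\in\mathcal Z$ is thus handled as loosely as in the paper's ``without loss of generality''. In your framework it is best treated by replacing $f(0)$ with the lowest Taylor coefficient at $0$ that is not forced to vanish, for which the same orthogonality holds in $\mathcal D$.
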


\begin{proof}
Without loss of generality, we can assume that $0\not\in \mathcal{Z}$. 
By Lemma \ref{fact2} and by formula \eqref{extr1}, we can express (unimodular multiples of) $\phi_n$ as  
$$
\phi_n(z) = \prod_{j=1}^{n} \phi_{z_j, \mu_{j-1}}(z) = \prod_{j=1}^{n} \Bigl( 1 - \frac{1}{k^{\mu_{j-1}}_{z_j}(z_j)} \Bigr)^{-1/2} \Bigl( 1 - \frac{k^{\mu_{j-1}}_{z_j}(z)}{k^{\mu_{j-1}}_{z_j}(z_j)} \Bigr).
$$ 
Hence, at $z=0$, we have
$$
\phi_n(0) = \prod_{j=1}^{n}  \Bigl( 1 - \frac{1}{k^{\mu_{j-1}}_{z_j}(z_j)} \Bigr)^{1/2}.
$$

Since $(\mathcal{M}_n)_{n\ge1}$ is a decreasing family of shift-invariant subspaces of $\cD(\mu)$, by \cite[Proposition 5.2]{EEL}, if 
$\mathcal{M}=\bigcap_{n\ge1}(\mathcal{M}_n)\not=\{0\}$, then the sequence $(\phi_n)_{n\ge 1}$ converges weakly to $\phi_{\mathcal{M}}$. Hence, $\lim_{n\to\infty}\phi_n(0)>0$, and we obtain \eqref{SScond}.

In the opposite direction, under condition \eqref{SScond}, we have $\lim_{n\to\infty}\phi_n(0)>0$. By normality and passing, if necessary, to a subsequence, we can find $\phi\in\cD(\mu)$ vanishing on $\mathcal{Z}$ such that $\phi(0)\not=0$. 
\end{proof}

\begin{remark}  \label{r32}
To apply criterion \eqref{SScond}, one needs to efficiently estimate the reproducing kernel \( k^\mu \) of \( \cD(\mu) \) on the diagonal. One can use here 
the asymptotic estimate
$$
k^{\mu}_{z}(z) \asymp 1 + \int_{0}^{|z|} \frac{dr}{(1 - r) P_\mu(rz/|z|) + (1 - r)^2}, \qquad z \in \D,
$$
with the implied constants independent of $z$ and $\mu$, see \cite[Theorem 1]{EEK}. 
\end{remark}  

\begin{remark}  
A.~Aleman considered in \cite{A} a more general class of Dirichlet spaces determined by 
positive superharmonic weights, which includes the \( \cD(\mu) \) spaces. 
Given a finite positive Borel measure \( \mu \) on the closed unit disk \( \overline{\D} \), we associate to it the Dirichlet space \(\widetilde{\mathcal{D}}(\mu) \) defined as the space of analytic functions \( f \) on \( \D \) satisfying the property
$$
\frac{1}{2\pi} \int_{\T} \int_{\overline{\D}} \frac{|f(\zeta) - f(z)|^2}{|\zeta - z|^{2}} \, |d\zeta| \, d\mu(z) < \infty.
$$
This class of spaces \( \widetilde{\mathcal{D}}(\mu) \) includes the important weighted Dirichlet spaces \( \mathcal{D}_\alpha \) (\( 0 \leq \alpha \leq 1 \)) defined as the spaces of analytic functions \( f \) on \( \D \) satisfying 
$$
\int_\D |f'(z)|^2 (1 - |z|^2)^\alpha \, dA(z) < \infty.
$$
In fact, $\mathcal{D}_\alpha=\widetilde{\mathcal{D}}(\mu)$ with 
$$
d\mu(z) = -(1 - |z|^2) \partial \overline{\partial} (1 - |z|^2)^\alpha \, dA(z), \qquad z \in \D.
$$
The results of A.~Aleman (see \cite[Theorem 4.9]{A}) generalize those of S.~Richter for \( \mathcal{D}(\mu) \) to the class $\widetilde{\mathcal{D}}(\mu)$. As a consequence, 
Theorem~\ref{ShapiroShieldsdmu} extends to the weighted Dirichlet spaces \( \widetilde{\mathcal{D}}(\mu) \). Note that if \( \alpha = 1 \), then \( \mathcal{D}_1 = H^2 \), 
and \eqref{SScond} becomes the Blaschke condition.
\end{remark}  

\begin{remark} To prove the Shapiro--Shields theorem, one can use that
$$
\Bigl( \prod_{j=1}^n \phi_{z_j,m}(z) \Bigr) \mathcal{D} \subset \mathcal{M}(\{z_1, \ldots, z_n\}, \mathcal{D}).
$$

Since 
$$
\psi_n(z) = \prod_{j=1}^n \phi_{z_j,m}(z) = \prod_{j=1}^n \Bigl( 1-\frac{1}{k^{m}_{z_j}(z_j)} \Bigr)^{-1/2} \Bigl( 1 - \frac{k^{m}_{z_j}(z)}{k^{m}_{z_j}(z_j)} \Bigr),
$$
we obtain that
$$
\psi_n(0) = \prod_{j=1}^{n}   \Bigl( 1 - \frac{1}{k^{m}_{z_j}(z_j)} \Bigr)^{1/2}. 
$$

Thus, the Shapiro--Shields condition,  
\begin{equation}
\sum_{n\ge 1} {1}/{k^{m}_{z_n}(z_n)} < \infty, 
\label{f25}
\end{equation}
is a necessary and sufficient condition for the sequence \( (\psi_n(0))_{n\ge 1} \) to have a non-zero limit. In this situation, the functions \( \psi_n \), the products of extremal functions $\phi_{z_j,m}$ for the Dirichlet space, converge to a non-zero function vanishing on \( \mathcal{Z} = (z_n)_{n} \).  

Since  
$$
k^\mu_z(z) = \sup\bigl\{|f(z)|^2 : f \in \mathcal{D}(\mu), \|f\|_\mu \le 1\bigr\},
$$
where $d\mu(\zeta) = |\varphi(\zeta)|^2 |d\zeta|$, $\|\varphi\|_\infty \leq 1$, and $\mathcal{D} = \mathcal{D}(m) \subset \mathcal{D}(\mu)$, it follows that  
\[
k^m_z(z) \leq k^{\mu}_z(z),\qquad z\in\mathbb D.
\]
Thus, the Shapiro--Shields condition \eqref{f25} implies condition \eqref{SScond}.

\end{remark}
\section{Sufficient conditions
}

Let $f \in H^2$ and let $B$ be a Blaschke product. 
Then, by Carleson's formula \cite{C1} (see also \cite[Theorem 4.1.3]{EKMR}),
\[
\mathcal{D}(Bf)
= \mathcal{D}(f)
+ \sum_{z\in \mathcal{Z}} \int_\mathbb{T} \frac{1 - |z|^2}{|1 - \overline{\zeta} z|^2} |f(\zeta)|^2 \,\frac{|d\zeta|}{2\pi},
\]
where $\mathcal{Z}$ is the zero set of $B$.  

Given $\alpha>0$ and $\lambda=(1-\delta_\lambda)e^{i\theta_\lambda}\in\mathbb{D}\setminus\{0\}$, we set
\begin{align}
r_{\lambda,\alpha}&= \delta_\lambda \log^{\alpha} \frac{1}{\delta_\lambda},\notag\\
\mathcal{S}_{\lambda,\alpha}
&= D(e^{i\theta_\lambda},r_{\lambda,\alpha})\cap  \mathbb{D},\notag\\
J_{\lambda,\alpha}
&= D(e^{i\theta_\lambda},r_{\lambda,\alpha})\cap  \mathbb{T},\label{dop18}
\end{align}
where $D(w,s)$ is the disk of radius $s$ centered at $w$.

The following result shows that if a Blaschke sequence admits a suitable covering by subregions associated with a zero set of the Dirichlet space, and satisfies a counting 
condition, then it is itself a zero set for the Dirichlet space.
\begin{theorem}
Let $\alpha>0$, $M<\infty$, let $\mathcal{Z}$ be a Blaschke sequence, and let $\Lambda$ be a zero set of $\mathcal{D}$.  
If there exist sets $\Delta_\lambda \subset \mathcal{S}_{\lambda,\alpha}$ such that  
\[
\mathcal{Z} \subset \bigcup_{\lambda\in\Lambda} \Delta_\lambda
\]
and
\begin{equation}\label{SSG11}
\sum_{\lambda \in \Lambda} \frac{\#(\mathcal{Z}\cap \Delta_\lambda)}
     {\log^M \frac{1}{1-|\lambda|}}
< \infty ,
\end{equation}
then $\mathcal{Z}$ is a zero set of $\mathcal{D}$.
\end{theorem}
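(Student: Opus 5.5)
Let $g\in\mathcal D$ be a nonzero function vanishing on $\Lambda$; we may take $g$ to be an extremal function, hence a multiplier with $|g|\le 1$. Let $B$ be the Blaschke product with zero set $\mathcal Z$. The plan is to produce a nonzero $h\in\mathcal D$ vanishing on $\mathcal Z$ of the form $h=B F$ with $F\in H^\infty\cap\mathcal D$, and to bound $\mathcal D(BF)$ by Carleson's formula:
\[
\mathcal D(BF)=\mathcal D(F)+\sum_{z\in\mathcal Z}\int_{\mathbb T}\frac{1-|z|^2}{|1-\overline\zeta z|^2}|F(\zeta)|^2\,\frac{|d\zeta|}{2\pi}.
\]
The natural choice is $F=g^{N}$ for a large integer $N$ depending on $\alpha$ and $M$. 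Then $F\in\mathcal D$, since $g$ is a multiplier. The task is to show that the sum over $z$ converges, using that $|g|$ is small near each $\lambda\in\Lambda$ and that each $z\in\mathcal Z$ lies in some $\Delta_\lambda\subset\mathcal S_{\lambda,\alpha}$.

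\textbf{Step 1 (smallness of $g$ near $\lambda$).} Since $g(\lambda)=0$ and $g\in\mathcal D$ with $\|g\|\le1$, we have for $w\in\mathbb D\cup\mathbb T$
\[
|g(w)|=|\langle g,k_w-k_\lambda\rangle|\le\|k_w-k_\lambda\|_{\mathcal D}.
\]
Using the explicit kernel \eqref{kerneldirichet},
\[
\|k_w-k_\lambda\|^2=k_w(w)+k_\lambda(\lambda)-2\operatorname{Re}k_w(\lambda)\asymp \log\frac{|1-\overline\lambda w|^2}{(1-|\lambda|)(1-|w|)}
\]
up to additive constants (for $w\in\mathbb T$ one uses instead the boundary estimate from the $H^2$ part plus Douglas' formula). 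For $\zeta\in\mathbb T$ at distance at most $C r_{\lambda,\alpha}$ from $e^{i\theta_\lambda}$, this gives a bound roughly of size $\log\log(1/\delta_\lambda)$ rather than $\log(1/\delta_\lambda)$, which is not small. So the pointwise kernel bound alone is too weak, and we need a better estimate.

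\textbf{Step 2 (the main obstacle).} We must show that for $z\in\mathcal S_{\lambda,\alpha}$,
\[
\int_{\mathbb T}P_z|g|^{2N}\,dm\lesssim \log^{-M}\frac1{\delta_\lambda}.
\]
I would split $\mathbb T$ into the arc $J=J_{\lambda,\alpha}$ (slightly enlarged) and its complement. On the complement, the Poisson kernel at $z$ is at most about $\delta_z/r_{\lambda,\alpha}^2$. This is not small unless $\delta_z\ll r_{\lambda,\alpha}$. That would need handling, possibly by replacing $F$ by $g^N\cdot$(an outer factor), or by noting that $\sum_z(1-|z|)$ is finite; the contribution from points $z$ with $\delta_z\ge r_{\lambda,\alpha}^{2}$ or so is controlled by the Blaschke sum.

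On $J$, I would use the Dirichlet-space capacity/Beurling-type estimate. A function of norm at most $1$ vanishing at $\lambda$ satisfies
\[
|g(\zeta)|^2\lesssim\frac{\log(|\zeta-\lambda|/\delta_\lambda)}{\log(1/\delta_\lambda)}
\]
for $|\zeta-\lambda|\le r_{\lambda,\alpha}$. This follows from the Cauchy–Schwarz estimate $|g(\zeta)-g(\lambda)|^2\le \mathcal D(g)\cdot\|k_\zeta-k_\lambda\|^2$, normalized correctly together with the norm bound. Its right-hand side is at most $C\alpha\log\log(1/\delta_\lambda)/\log(1/\delta_\lambda)$. Raising to the power $N$ with $N>M$ gives a bound $\lesssim\log^{-M}(1/\delta_\lambda)$, since $(\log\log)^N/\log^N\le\log^{-M}$ for large arguments.

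\textbf{Step 3 (summation).} Summing over $z\in\mathcal Z\cap\Delta_\lambda$ and then over $\lambda$, the contribution is at most
\[
C\sum_{\lambda}\frac{\#(\mathcal Z\cap\Delta_\lambda)}{\log^M(1/\delta_\lambda)}<\infty
\]
by \eqref{SSG11}, plus the Blaschke-controlled remainder. Hence $Bg^N\in\mathcal D$, it is nonzero, and it vanishes on $\mathcal Z$, so $\mathcal Z$ is a zero set of $\mathcal D$.

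I expect the real difficulty to be making the local smallness estimate of Step 2 rigorous, namely the $\log\log/\log$ decay of $|g|^2$ near $\lambda$ on $J$, together with controlling the off-arc Poisson mass.
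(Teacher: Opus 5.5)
Your overall scheme is the paper's: apply Carleson's formula to $Bg^N$, with $g$ the extremal function of $\mathcal{M}(\Lambda,\mathcal{D})$, and split $\mathbb{T}$ into an arc around $e^{i\theta_\lambda}$ and its complement. But both halves of your Step 2 have genuine gaps.

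\textbf{The on-arc bound.} You claim it for an arbitrary $g$ of norm at most $1$ vanishing at $\lambda$, and that claim is false. Take $g=b_\lambda/\sqrt{2}$, with $b_\lambda$ the Blaschke factor at $\lambda$. Then $\|g\|_{\mathcal{D}}^2=(1+1)/2=1$ and $g(\lambda)=0$, yet $|g|^2=1/2$ everywhere on $\mathbb{T}$. Cauchy--Schwarz against kernel differences gives at best the $\log\log(1/\delta_\lambda)$ bound of your Step 1; no normalization produces the extra factor $1/\log(1/\delta_\lambda)$. That gain is specific to extremal functions, and it rests on two facts you never use.
\begin{itemize}
\item The one-point extremal function $\varphi_\lambda=(1-1/k^m_\lambda(\lambda))^{-1/2}(1-k^m_\lambda/k^m_\lambda(\lambda))$ is explicit. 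A direct computation with the logarithmic kernel gives $|k^m_\lambda(\lambda)-k^m_\lambda(\zeta)|\lesssim_\gamma\log\log(1/\delta_\lambda)$ for $\zeta\in J_{\lambda,\gamma}$. Hence $|\varphi_\lambda(\zeta)|\lesssim_\gamma \log\log(1/\delta_\lambda)/\log(1/\delta_\lambda)$ there.
\item By Richter's factorization and Lemma~\ref{produit}, $|g|=|\varphi_\lambda\phi_\lambda|$, where $\phi_\lambda$ is an extremal function of $\mathcal{D}(|\varphi_\lambda|^2dm)$ vanishing on $\Lambda\setminus\{\lambda\}$. Extremal functions are bounded by $1$ (Aleman, Shimorin), so $|g|\le|\varphi_\lambda|$ on $\mathbb{T}$.
\end{itemize}

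\textbf{The off-arc part.} You leave this open, and your proposed repair does not work. For an arc of radius comparable to $r_{\lambda,\alpha}$, the Poisson mass of the complement at $z\in\mathcal{S}_{\lambda,\alpha}$ is of order $\delta_z/r_{\lambda,\alpha}$, which can be $\asymp 1$. What has to be summed is $\delta_z/r_{\lambda,\alpha}$, not $\delta_z$, so the Blaschke condition gives no control.

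The missing idea is to enlarge the arc by a power of the logarithm: take $J_{\lambda,\gamma}$ with $\gamma>\alpha+M$.
\begin{itemize}
\item Off the arc: for $z\in\mathcal{S}_{\lambda,\alpha}$, $\zeta\in\mathbb{T}\setminus J_{\lambda,\gamma}$ and $\delta_\lambda$ small, $|1-\bar\zeta z|\ge\frac12|\zeta-e^{i\theta_\lambda}|$. Using only $|g|\le 1$, the off-arc term is $\lesssim(1-|z|)/r_{\lambda,\gamma}\le r_{\lambda,\alpha}/r_{\lambda,\gamma}=\log^{\alpha-\gamma}(1/\delta_\lambda)\le\log^{-M}(1/\delta_\lambda)$.
\item On the arc: the enlargement is harmless, because $\log(r_{\lambda,\gamma}/\delta_\lambda)=\gamma\log\log(1/\delta_\lambda)$. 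So $|g|\le|\varphi_\lambda|\lesssim_\gamma\log\log(1/\delta_\lambda)/\log(1/\delta_\lambda)$ holds on all of $J_{\lambda,\gamma}$. Since the Poisson kernel has total mass at most $1$, $\int_{J_{\lambda,\gamma}}\frac{1-|z|^2}{|1-\bar\zeta z|^2}|g(\zeta)|^{2N}\frac{|d\zeta|}{2\pi}\lesssim\log^{-M}(1/\delta_\lambda)$ once $2N>M$.
\end{itemize}
With these two ingredients your Step 3 goes through, with no ``Blaschke-controlled remainder'' needed.
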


\begin{proof} Without loss of generality, we assume that $\Lambda\cap D(0,1/2)=\emptyset$. 
Let $\varphi$ be an extremal function for $\mathcal{D}$ such that 
\[
\mathcal{M}(\Lambda,\mathcal{D})
= \varphi\, \mathcal{D}(\mu_\varphi),
\qquad 
d\mu_\varphi = |\varphi|^2 \, dm.
\]

For every $\lambda\in\Lambda$, let $\varphi_\lambda = \phi_{\lambda,m}$ be an extremal function vanishing at $\lambda$, and let  
$\phi_\lambda \in \mathcal{D}(\mu_\lambda)$ be an extremal function vanishing on $\Lambda\setminus\{\lambda\}$, with  
$d\mu_\lambda(\zeta) = |\varphi_\lambda(\zeta)|^2 dm$.  
We have 
$$
|\varphi|=|\varphi_\lambda\phi_\lambda|, \qquad \lambda\in\Lambda.
$$

Let $B$ be the Blaschke product with the zero set $\mathcal{Z}$, and choose $N\ge 1$ such that $2N > M$.  
Using Carleson's formula, we obtain that 
\begin{equation}\label{ineq1}
\mathcal{D}(B\varphi^N)
= \mathcal{D}(\varphi^N)
+ \frac{1}{2\pi} \sum_{z\in \mathcal{Z}} \int_\mathbb{T}
     \frac{1 - |z|^2}{|1 - \overline{\zeta} z|^2}
     |\varphi(\zeta)|^{2N} \, |d\zeta|.
\end{equation}

Recall that
\begin{equation}\label{equationextremal}
\varphi_\lambda(z)
= \Big(1 - \frac{1}{k_{\lambda}^m(\lambda)}\Big)^{-1/2}
  \Big(1 - \frac{k_{\lambda}^m(z)}{k_{\lambda}^m(\lambda)}\Big),  
\end{equation}
where $k_\lambda^m$ is the reproducing kernel given by \eqref{kerneldirichet}.  

Next,
$$
k_{\lambda}^m(\lambda)=\frac1{|\lambda|^2}\log \frac1{1-|\lambda|^2}\asymp \log \frac1{\delta_\lambda}.
$$

Let $\gamma >0$, $\lambda\in\mathbb D\setminus\{0\}$, $\zeta \in J_{\lambda,\gamma}$. Then we have 
\begin{multline}
|k_{\lambda}^m(\lambda)-k_{\lambda}^m(\zeta)|=\Bigl| \frac1{\overline{\lambda}\zeta}\log \frac1{1-\overline{\lambda}\zeta}-\frac1{|\lambda|^2}\log \frac1{1-|\lambda|^2} \Bigr|\\ \le 
\frac1{|\lambda|}\Bigl( \Bigl| \frac1\zeta-\frac1\lambda \Bigr|\log \frac1{1-|\lambda|^2} +\Bigl| \log \frac{1-|\lambda|^2}{1-\overline{\lambda}\zeta}\Bigr|  \Bigr)\\ \le 
\frac{|\zeta-\lambda|}{|\lambda|^2}\log \frac1{1-|\lambda|^2}+
\frac1{|\lambda|}\Bigl( \bigl| \zeta e^{-i\theta_\lambda}\bigr| + \Bigr|\log \frac{1-|\lambda|^2}{(\lambda-\zeta)e^{-i\theta_\lambda}}\Bigr|  \Bigr)\\ \le 
C\Bigl(\delta_\lambda\log^{\gamma+1}\frac1{\delta_\lambda} + \log\log \frac1{\delta_\lambda}\Bigr),
\label{eqextremal}
\end{multline} 
for an absolute constant $C$.

Therefore,
$$|\varphi_n(\zeta)|
\le
C_\gamma  \frac{\log \log (1/\delta_\lambda)}
       { \log (1/\delta_\lambda)},\qquad \zeta \in J_{\lambda,\gamma},
$$
with $C_\gamma$ depending only on $\gamma$. 

Now, since $2N > M$, we have 
\begin{equation}\label{ineq2}
\sum_{z \in \Delta_{\lambda}\cap\mathcal Z}
\int_{J_{\lambda,\gamma}}
  \frac{1 - |z|^2}{|1 - \overline{\zeta} z|^2}
  |\varphi(\zeta)|^{2N} \, |d\zeta|
\le
C'_\gamma \,
\frac{\#(\mathcal{Z}\cap \Delta_{\lambda})}
     {\log^M \frac{1}{1 - |\lambda|}}, \qquad \lambda\in\mathbb D\setminus\{0\},
\end{equation}
with $C'_\gamma$ depending only on $\gamma$. 

On the other hand, if $\gamma>\alpha>0$, $\lambda\in\mathbb D\setminus D(0,c(\alpha,\gamma))$, $\zeta\in \mathbb T\setminus J_{\lambda,\gamma}$, and $z \in \mathcal{S}_{\lambda,\alpha}$, then
\[
|1 - \overline{\zeta} z|\ge \frac{1}{2}|\zeta - e^{i\theta_\lambda}|.
\]
Therefore,
\begin{equation}\label{eqextremal1}
\int_{\mathbb{T}\setminus J_{\lambda,\gamma}}
  \frac{|d\zeta|}{|1 - \overline{\zeta} z|^2}\le
\frac{C_{\alpha,\gamma}}
     {(1-|\lambda|)
       \log^\gamma \frac{1}{1 - |\lambda|}},\qquad  z \in \mathcal{S}_{\lambda,\alpha},
\end{equation}
with $C_{\alpha,\gamma}$ depending only on $\alpha$ and $\gamma$.

Let now $\gamma>\alpha+M$. Since $\|\varphi\|_\infty \le 1$ and  
$$
1-|z| \lesssim (1 - |\lambda|)   \log^\alpha \!\frac{1}{1 - |\lambda|},\qquad  z \in \mathcal{S}_{\lambda,\alpha}, 
$$   
we obtain that    
\begin{align}\label{ineq3}
\sum_{z \in \Delta_{\lambda}\cap\mathcal Z}
\int_{\mathbb{T}\setminus J_{\lambda,\gamma}}
  \frac{1 - |z|^2}{|1 - \overline{\zeta} z|^2}  |\varphi(\zeta)|^{2N}\, |d\zeta|
\lesssim 
\frac{\#(\mathcal{Z}\cap \Delta_{\lambda})}
     {\log^{\gamma - \alpha} \frac{1}{1 - |\lambda|}} .
\end{align}

Combining \eqref{ineq1}, \eqref{ineq2}, \eqref{ineq3}, and the assumption \eqref{SSG11}, we obtain that
\[
\mathcal{D}(B\varphi^{N})
\lesssim
\sum_{\lambda\in\Lambda}
\frac{\#(\mathcal{Z}\cap \Delta_{\lambda})}
     {\log^M \frac{1}{1 - |\lambda|}}
< \infty.
\]
This concludes the proof.
\end{proof}

Let $\mathcal{Z} = \{z_n\}_{n\ge 1} \subset \mathbb{D}$, and let $p=(p_n)_{n\ge 1}$ be a sequence of positive integers.  
We say that $(\mathcal{Z},p)$ is a zero divisor for $\mathcal{D}$ if there exists a non-vanishing function $f\in\mathcal{D}$ such that
\[
f^{(k)}(z_n)=0 \qquad 0\le k<p_n,\,n\ge 1.
\]

If $\mathcal{Z}$ is a zero set for $\mathcal{D}$, then (by Carleson's formula) there exists $f\in\mathcal{D}$ such that
\[
\sum_{n\ge 1}
\int_\mathbb{T}
  \frac{1 - |z_n|^2}{|1 - \bar{\zeta} z_n|^2}
  |f(\zeta)|^2
\, \frac{|d\zeta|}{2\pi}
< \infty.
\]
Hence, we may choose a sequence $p=(p_n)_{n\ge 1}$ with $p_n\to\infty$ such that
\[
\sum_{n\ge 1}
p_n\!
\int_\mathbb{T}
  \frac{1 - |z_n|^2}{|1 - \bar{\zeta} z_n|^2}
  |f(\zeta)|^2
\, \frac{|d\zeta|}{2\pi}
< \infty,
\]
and, hence, $(\mathcal{Z},p)$ is a zero divisor for $\mathcal{D}$, with multiplicities tending to infinity.  

The next corollary establishes a quantative version of this result, giving 
a convenient sufficient condition for zero divisors.

\begin{corollary}
Let $\mathcal{Z}=\{z_n\}_{n\ge 1}$ be a zero set for $\mathcal{D}$, separated  in the pseudohyperbolic metric, and let $p=(p_n)_{n\ge 1}$ be a sequence of positive integers.   
If 
\[
\sum_n
\frac{p_n}
     {\log^M \frac{1}{1 - |z_n|}}
< \infty,
\]
for some $M<\infty$, then $(\mathcal{Z},p)$ is a zero divisor of $\mathcal{D}$.
\end{corollary}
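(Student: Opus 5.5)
The plan is to reduce to the preceding theorem, with $\Lambda=\mathcal{Z}$ and $\Delta_\lambda$ a small pseudohyperbolic disk around $\lambda$, and to encode multiplicities by repeating points. The theorem is stated for a Blaschke sequence $\mathcal{Z}$ with the Blaschke product $B$ formed from it. Its proof only uses Carleson's formula for $\mathcal{D}(Bf)$, and that formula holds for Blaschke products with repeated zeros, since the sum runs over zeros counted with multiplicity. So I consider the divisor $\widetilde{\mathcal{Z}}$ in which each $z_n$ is repeated $p_n$ times, and take
$$
B=\prod_n b_{z_n}^{p_n}.
$$
This is a Blaschke product. Indeed, $\sum p_n/\log^M(1/(1-|z_n|))<\infty$ forces $p_n\lesssim \log^M(1/(1-|z_n|))$. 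Moreover $\mathcal{Z}$, being a Dirichlet zero set, is a Blaschke sequence, and $(1-|z|)\log^M(1/(1-|z|))\lesssim (1-|z|)^{1/2}$. To make $\sum p_n(1-|z_n|)<\infty$ rigorous I would instead use the weaker bound $p_n\le C\log^M(1/(1-|z_n|))$ together with separation: a separated Blaschke sequence has only $O(2^{k})$ points in the annulus $1-|z|\sim 2^{-k}$. Hence
$$
\sum p_n(1-|z_n|)\lesssim \sum_k 2^k k^M 2^{-k}\cdot(\text{density}),
$$
which can diverge, so this route is not enough on its own. The cleaner route is to note that $\sum_n p_n(1-|z_n|)\le \bigl(\sup_n p_n(1-|z_n|)^{1/2}\bigr)\sum_n (1-|z_n|)^{1/2}$ is not available either. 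I therefore expect to use directly that the Carleson-formula sum in the theorem's proof is finite, namely the bound $\sum_\lambda \#(\widetilde{\mathcal Z}\cap\Delta_\lambda)/\log^M$. That bound implies that $B\varphi^N\in\mathcal{D}$. The Blaschke condition for the repeated divisor follows from $\sum p_n(1-|z_n|)\le \sum p_n/\log^M(\cdot)\cdot\sup_n (1-|z_n|)\log^M(1/(1-|z_n|))<\infty$. This is immediate, so no separation is needed for this step.

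Next I choose $\Lambda=\mathcal{Z}$, which is a zero set of $\mathcal{D}$ by hypothesis, and $\Delta_{z_n}=\{z_n\}$, or more precisely a pseudohyperbolic disk of radius $\delta/2$ about $z_n$, where $\delta$ is the separation constant. For large $|z_n|$ such a disk lies in $\mathcal{S}_{z_n,\alpha}$ for any fixed $\alpha>0$, since its Euclidean size is $\asymp 1-|z_n|\ll r_{z_n,\alpha}$ and its center is within $\delta_{z_n}$ of $e^{i\theta_{z_n}}$. Finitely many small $|z_n|$ can be discarded, because dividing out finitely many zeros preserves membership in $\mathcal{D}$. Separation guarantees that these disks are pairwise disjoint, so each point of $\mathcal{Z}$ lies in exactly one $\Delta_\lambda$. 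Counted with multiplicity, $\#(\widetilde{\mathcal Z}\cap\Delta_{z_n})=p_n$. Condition \eqref{SSG11} then becomes exactly the hypothesis $\sum_n p_n/\log^M(1/(1-|z_n|))<\infty$.

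Running the proof of the preceding theorem verbatim with $B$ having zeros of multiplicity $p_n$ gives $B\varphi^N\in\mathcal{D}$ and nonzero. Carleson's formula and estimates \eqref{ineq2} and \eqref{ineq3} are additive over zeros, so multiplicities only enter through the count. Since $B$ vanishes to order $p_n$ at $z_n$ and $\varphi^N$ is bounded analytic, $f=B\varphi^N$ satisfies $f^{(k)}(z_n)=0$ for $k<p_n$. The main point to check carefully is that the theorem's argument really tolerates repeated zeros, that is, that Carleson's formula is used with multiplicity. I would verify this by approximating by finite products and applying monotone convergence in Carleson's formula.
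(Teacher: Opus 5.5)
Your argument is correct and is the intended derivation: the corollary follows from the preceding theorem with $\Lambda=\mathcal Z$, $\Delta_{z_n}$ a small pseudohyperbolic disk (or simply $\{z_n\}$), and each $z_n$ repeated $p_n$ times, since Carleson's formula counts zeros with multiplicity. In the write-up, keep only your final Blaschke estimate $\sum_n p_n(1-|z_n|)\le \sup_{0<x<1}x\log^M(1/x)\cdot\sum_n p_n/\log^M\frac{1}{1-|z_n|}<\infty$ and delete the abandoned routes before it; also note that the finitely many discarded points near the origin are restored by multiplying by the polynomial $\prod (z-z_n)^{p_n}$ over those indices, not by ``dividing out'' zeros.
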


If we start with a zero sequence satisfying a condition stronger than that of Shapiro--Shields, we obtain the following result.

\begin{theorem}
Let $\beta\in(0,1)$ and let $\Lambda=\{\lambda_n\}\subset \mathbb{D}$ be such that 
\begin{equation}
\label{appD}
\sum_{\lambda\in\Lambda} \frac{1}{\log^\beta \frac{1}{1-|\lambda|}} < \infty.
\end{equation}
Let $\alpha>0$, $\gamma\in(0,1-\beta)$, let $\Delta_\lambda\subset \mathcal{S}_{\lambda,\alpha}$, $\lambda\in\Lambda$, 
and let $\mathcal{Z}$ be a subset of $\bigcup_{\lambda\in\Lambda}\Delta_\lambda$  
such that 
$$
\sum_{\lambda\in\Lambda} \#(\mathcal{Z}\cap\Delta_\lambda)\,
e^{-\log^\gamma \frac{1}{1-|\lambda|}} < \infty.
$$
Then $\mathcal{Z}$ is a zero set of $\mathcal{D}$.
\end{theorem}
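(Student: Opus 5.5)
The plan is to reuse the Carleson-formula argument of the previous theorem, but with a much stronger ``killing'' function built from \emph{high powers} of the single-point extremal functions $\varphi_\lambda=\phi_{\lambda,m}$. Fix $\varepsilon>0$ with $\gamma<1-\beta-\varepsilon$, write $L_\lambda=\log\frac1{\delta_\lambda}$ with $\delta_\lambda=1-|\lambda|$, and set $p_\lambda=\lceil L_\lambda^{1-\beta-\varepsilon}\rceil$. Then
$$
\sum_{\lambda\in\Lambda}\frac{p_\lambda}{k^m_\lambda(\lambda)}\asymp\sum_{\lambda\in\Lambda}\frac{1}{L_\lambda^{\beta+\varepsilon}}<\infty
$$
by \eqref{appD}. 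The argument of the Remark following Theorem~\ref{ShapiroShieldsdmu} works verbatim with repeated points. It shows that the partial products of
$$
\psi=\prod_{\lambda\in\Lambda}\varphi_\lambda^{\,p_\lambda}
$$
have $\psi_n(0)=\prod(1-1/k^m_\lambda(\lambda))^{p_\lambda/2}$ bounded away from $0$. Each partial product has $\mathcal D$-norm at most $1$, being a product of extremal functions (Lemma~\ref{produit}, using $\mathcal D\subset\mathcal D(\mu)$ contractively). So by normality a subsequence converges weakly to some $\psi\in\mathcal D$ with $\psi(0)\ne0$, $\|\psi\|_\infty\le1$, and $|\psi|\le|\varphi_\lambda|^{p_\lambda}$ on $\mathbb T$ for every $\lambda$, since each factor is bounded by $1$. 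As before, we may assume $\Lambda\cap D(0,1/2)=\emptyset$ and that all $L_\lambda$ are large.

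Let $B$ be the Blaschke product with zeros $\mathcal Z$; $\mathcal Z$ is Blaschke because $\#(\mathcal Z\cap\Delta_\lambda)\,e^{-L_\lambda^\gamma}$ is summable and $1-|z|\lesssim\delta_\lambda L_\lambda^\alpha$ on $\Delta_\lambda$. By Carleson's formula,
$$
\mathcal D(B\psi)=\mathcal D(\psi)+\sum_{\lambda\in\Lambda}\sum_{z\in\mathcal Z\cap\Delta_\lambda}\int_{\mathbb T}\frac{1-|z|^2}{|1-\bar\zeta z|^2}|\psi(\zeta)|^2\frac{|d\zeta|}{2\pi}.
$$
The key change from the previous proof is to replace the arcs $J_{\lambda,\gamma}$, which are only polylogarithmically wider than $\delta_\lambda$, by the much wider arcs
$$
I_\lambda=D\bigl(e^{i\theta_\lambda},\delta_\lambda e^{2L_\lambda^\gamma}\bigr)\cap\mathbb T .
$$

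On $\mathbb T\setminus I_\lambda$ we use only $|\psi|\le1$. For $z\in\mathcal S_{\lambda,\alpha}$ we have $|1-\bar\zeta z|\gtrsim|\zeta-e^{i\theta_\lambda}|$, so the integral is
$$
\lesssim\frac{1-|z|}{\delta_\lambda e^{2L_\lambda^\gamma}}\lesssim L_\lambda^{\alpha}e^{-2L_\lambda^\gamma}\le e^{-L_\lambda^\gamma}.
$$

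On $I_\lambda$ we use that the Poisson kernel has total mass $1$, so the integral is at most $\sup_{I_\lambda}|\psi|^2\le\sup_{I_\lambda}|\varphi_\lambda|^{2p_\lambda}$. Repeating the computation \eqref{eqextremal} with $|\zeta-\lambda|\le2\delta_\lambda e^{2L_\lambda^\gamma}$ gives the following. The first term is $\delta_\lambda e^{2L_\lambda^\gamma}L_\lambda=o(1)$ because $\gamma<1$. The logarithmic term is
$$
\Bigl|\log\frac{1-|\lambda|^2}{1-\bar\lambda\zeta}\Bigr|\le 2L_\lambda^\gamma+O(1).
$$
Hence, by \eqref{equationextremal}, $|\varphi_\lambda(\zeta)|\le C L_\lambda^{\gamma-1}$ on $I_\lambda$, and therefore
$$
\sup_{I_\lambda}|\psi|^2\le\exp\bigl(-2p_\lambda\,((1-\gamma)\log L_\lambda-\log C)\bigr)\le\exp\bigl(-L_\lambda^{1-\beta-\varepsilon}\bigr)\le e^{-L_\lambda^\gamma}
$$
for $L_\lambda$ large, since $1-\beta-\varepsilon>\gamma$.

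Summing both pieces gives $\mathcal D(B\psi)\lesssim\mathcal D(\psi)+\sum_\lambda\#(\mathcal Z\cap\Delta_\lambda)\,e^{-L_\lambda^\gamma}<\infty$. Thus $B\psi\in\mathcal D$ is a nonzero function vanishing on $\mathcal Z$.

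The main obstacle I expect is the pointwise estimate of $|\varphi_\lambda|$ on the wide arc $I_\lambda$. The bound \eqref{eqextremal} was only written for $J_{\lambda,\gamma}$, and its first term must be rechecked so that it stays $o(L_\lambda)$ when the radius is $\delta_\lambda e^{2L_\lambda^\gamma}$ rather than polylogarithmic in $\delta_\lambda$; this is exactly where $\gamma<1$ is used. The decisive balance is then $p_\lambda\log L_\lambda\gg L_\lambda^\gamma$ while $\sum p_\lambda/L_\lambda<\infty$, and this is precisely what the hypothesis $\gamma<1-\beta$ buys. A secondary point to verify is that the Remark's product argument tolerates multiplicities, i.e.\ that $\varphi_\lambda^{p_\lambda}$ still yields a weakly convergent bounded sequence. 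This follows from Lemma~\ref{produit} applied iteratively together with $\|\varphi_\lambda\|_\infty\le1$.
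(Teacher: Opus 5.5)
Your proposal is correct and follows essentially the paper's argument. The paper also uses a product $\Phi=\prod_{\lambda}\varphi_\lambda^{b_\lambda}$ of powers of the one-point extremal functions, Carleson's formula, and a split of $\mathbb T$ at an arc of sub-exponential width, applying \eqref{eqextremal} on the arc and the Poisson tail off it. The differences are only in the parameters:
\begin{itemize}
\item The paper takes $b_\lambda=\lfloor L_\lambda^\gamma\rfloor$. This already suffices because of the extra $\log L_\lambda$ factor in the exponent, so your $\varepsilon$-room is not needed.
\item The paper's arc $J_{\lambda,b_\lambda}$ has radius $\delta_\lambda L_\lambda^{b_\lambda}$, where you take $\delta_\lambda e^{2L_\lambda^\gamma}$.
\end{itemize}

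The one step to re-justify is $\|\psi_n\|_{\mathcal D}\le1$. Lemma~\ref{produit} does not apply, since all your factors are extremal for $\mathcal D$ itself rather than for the successive spaces $\mathcal D(\mu_{\cdot})$. Instead, Richter's identity gives $\|f\varphi_\lambda\|_{\mathcal D}=\|f\|_{\mathcal D(\mu_{\varphi_\lambda})}\le\|f\|_{\mathcal D}$, because $d\mu_{\varphi_\lambda}=|\varphi_\lambda|^2dm\le dm$. So extremal functions are contractive multipliers, and the bound follows by iteration. This is also a more complete justification of $\Phi\in\mathcal D$ than the paper's brief ``product of multipliers''.
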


\begin{proof}
Let us first observe that $\mathcal{Z}$ is a Blaschke sequence. Indeed, if 
$z\in\mathcal{S}_{\lambda,\alpha}$, then
\[
\log^\gamma\frac{1}{1-|\lambda|} \le \log\frac{1}{1-|z|}.
\]
Hence,
\[
\sum_{\lambda\in\Lambda} 
\#(\mathcal{Z}\cap\Delta_\lambda)\,
e^{-\log^\gamma \frac{1}{1-|\lambda|}}
\ge
\sum_{z\in\mathcal{Z}} (1-|z|).
\]

Next, let $\varphi_\lambda$ be the extremal function vanishing at $\lambda\in\Lambda$, given by 
\eqref{equationextremal}, and set
$$
\Phi=\prod_{\lambda\in\Lambda} \varphi_\lambda^{b_\lambda}, 
$$
where $b_\lambda$ is the integer part of $\log^\gamma\frac{1}{1-|\lambda|}$. 
Since $\gamma\in(0,1-\beta)$, condition \eqref{appD} ensures that the product defining 
$\Phi$ converges and $\Phi\in\mathcal{D}$, as a product of multipliers.

Let $B$ be the Blaschke product associated with $\mathcal{Z}$. It remains to show that 
$B\Phi\in\mathcal{D}$.  
By Carleson's formula, it suffices to verify that 
\[
\sum_{z\in\mathcal{Z}}
\int_{\mathbb{T}} 
\frac{1-|z|^2}{|1-\overline{\zeta} z|^2}\,
|\Phi(\zeta)|^2\,
\,\frac{d\zeta}{2\pi}<\infty.
\]

We set
$$J_\lambda = J_{\lambda,b_\lambda},
$$
with $J_{\cdot,\cdot}$ defined in \eqref{dop18}.

If $\lambda\in\Lambda$, $\delta_\lambda=1-|\lambda|$, and $\zeta\in J_\lambda$, then by \eqref{eqextremal}, we have 
$$
|\Phi(\zeta)|
\le 
|\varphi_{\lambda}(\zeta)|^{b_\lambda}
\le 
\left(
C\delta_\lambda\log^{b_\lambda}\frac1{\delta_\lambda} + \frac{C\log\log (1/\delta_\lambda)}{\log(1/\delta_\lambda)}\Bigr)
\right)^{b_\lambda}
\le
C_1\exp\!\Big(
-\log^\gamma\frac{1}{1-|\lambda|}
\Big).
$$
for some absolute constant $C_1$. 

Therefore, if $z\in\Delta_{\lambda}$, then 
\[
\int_{J_\lambda}
\frac{1-|z|^2}{|1-\overline{\zeta} z|^2}
|\Phi(\zeta)|^2
\,\frac{d\zeta}{2\pi}
\le
C_1\exp\!\Big(
-\log^\gamma\frac{1}{1-|\lambda|}
\Big).
\]

Furthermore, 
as in \eqref{eqextremal1}, we have 
\[
\int_{\mathbb{T}\setminus J_\lambda}
\frac{1-|z|^2}{|1-\overline{\zeta} z|^2}
|\Phi(\zeta)|^2
\,\frac{d\zeta}{2\pi}
\lesssim 
\frac{1-|z|}
{(1-|\lambda|)\big(\log \frac{1}{1-|\lambda|}\big)^{b_\lambda}}.
\]

Thus,
\begin{multline*}
\sum_{z\in\mathcal{Z}}
\int_{\mathbb{T}}
\frac{1-|z|^2}{|1-\overline{\zeta} z|^2}
|\Phi(\zeta)|^2
\,\frac{d\zeta}{2\pi}
\\
\lesssim  
\sum_{\lambda\in\Lambda}
\#(\mathcal{Z}\cap\Delta_{\lambda})
\Big(
e^{-\log^\gamma\frac{1}{1-|\lambda|}}
+
e^{-(b_\lambda-\alpha)\log\log\frac{1}{1-|\lambda|}}
\Big)
\\
\lesssim 
\sum_{\lambda\in\Lambda}
\#(\mathcal{Z}\cap\Delta_{\lambda})
\,
e^{-\log^\gamma\frac{1}{1-|\lambda|}}
< \infty.
\end{multline*}

This proves that $B\Phi\in\mathcal{D}$ and completes the proof.
\end{proof}

\subsection*{Acknowledgements}
The authors are grateful to Anton Baranov for a helpful remark. 
The first and third authors were supported by ANR Project ANR-24-CE40-5470. 
The second author was partially supported by the Arab Fund Foundation Fellowship Program, The Distinguished Scholar Award -- File 1092. He also acknowledges the Laboratory of Analysis and Applied Mathematics (LAMA) at Gustave
Eiffel University for its kind hospitality during the preparation of this paper.

\end{document}